\newtheorem{thm}{Theorem}[section]
\newtheorem{problem}[thm]{Problem}
\theoremstyle{definition}
\begin{document}

\title{A convex $\sigma$-morphic protoset}

\author{Aleksa D\v zuklevski\\
\em \small Department of Mathematics and Informatics, University of Novi Sad,\\
\em \small Trg Dositeja Obradovi\'ca 4, 21000 Novi Sad, Serbia\\
\small \tt aleksa.dzuklevski@dmi.uns.ac.rs}
\date{}

\maketitle

\begin{abstract}
We say that a tile is $\sigma$-morphic if it tiles the plane in exactly $\aleph_0$ many noncongruent ways (up to an isometry). It is an unsolved problem of whether a $\sigma$-morphic tile exist in the plane. In this note we present a construction of a set of convex tiles that is $\sigma$-morphic. The result is interesting since all the constructions of $\sigma$-morphic sets of tiles that arise in the literature make use of bumps and nicks, which necessarily make the tiles non-convex. We construct our set by cleverly dividing the tiles of the set of tiles discovered by Schmitt into convex tiles so that they behave in the same manner.

\emph{Mathematics Subject Classification (2010):} 52C20, 05B45, 52A37

\emph{Keywords:} tiling, tessellation, polymorphic tile, $\sigma$-morphic tile, convex protosets
\end{abstract}

\section{Introduction}

The theory of tilings is a relatively young research area in discrete geometry and there are many yet unsolved problems concerning various elementary notions regarding tiles. Formally, the tiling $\mathcal{T}$ of the Euclidean plane is a set of elements $T_i$, $i\in \mathbb{N}$, which are closed topological discs in $\mathbb{E}^2$, such that $\bigcup_{i \in \mathbb{N}} T_i = \mathbb{E}^2$ and that $\textup{int}T_i \cap \textup{int}T_j = \varnothing$ for all $i \neq j.$ If all $T_i$ are congruent to some tile $T$, we say that $T$ tiles the plane, or that it admits a tiling of the plane. The set of distinct tiles that appear in a tiling is called the protoset and its elements the prototiles. Amongst the unsolved problems in tiling theory there are those that are more combinatorial in nature, and perhaps the best example is the problem of the so called \textit{polymorphic} tiles.

We say that a tile is $\textit{m-morphic}$ if it tiles the plane in exactly $m$ non-congruent ways, where two tilings are congruent if there is an isometry that maps one to the other \cite{TIP}. Gr{\"u}nbaum and Shephard asked in 1977 \cite{PDT} if for each integer $m$ there exists an $m$-morphic tile, and the problem is still open. The record holder is an $11$-morphic polyomino discovered by Myers \cite{myerspoly}.

Gr{\"u}nbaum and Shephard then focused on tiles that tile the plane in infinitely many ways and proceeded to make a fascinating observation: each tile they found, which tiles the plane in $\textit{infinitely}$ many ways, did so in $\mathfrak{c}$ many ways! A very natural question was then asked, of the existence of a tile that tiles the plane in exactly $\aleph_0$ many distinct ways, which was dubbed $\sigma$-morphic, and whose existence is still an open problem.

One of the main ideas behind "forcing" the tiles to behave in the way one would like them to, and which stands behind a lot of constructions seen in the literature (the construction of $\sigma$-morphic protosets by Schmitt \cite{schmitttrougao, schmitthierarchy} or of a $\sigma$-morphic tile with generalized matching rules by Ba\v si\' c, D\v zuklevski and Slivkov\' a \cite{mi}) is to use bump and nicks on the tiles, which then force specific edges of tiles to meet in a tiling (cf. Figure \ref{fig schmitt2}). Thus the problem of finding a protoset of convex prototiles that is $\sigma$-morphic is an interesting one. In this note we show how it can be constructed. 

The organization of the paper is as follows. In the next section we give an overview of the known results on $\sigma$-morphic tiles, the mentioned variations that have been solved, and survey the main ideas of forcing $\aleph_0$ many distinct tilings. In Section \ref{sec4}, we show the existence of a $\sigma$-morphic protoset which consists solely of convex prototiles. In the last section we give some open problems and directions for future research.

\section{An overview of the main ideas from the literature} \label{sec2}

\subsection{Different ways of achieving $\mathfrak{c}$ many tilings}\label{sec2.1}
Let us first consider how $\mathfrak{c}$ many tilings can arise. To that end, consider a regular tiling by unit squares and note that one can slide any of the rows in this tiling with respect to the rest for any real number between $0$ and $\frac{1}{2}$ to obtain a distinct tiling by squares. And indeed, one of the ways a $\mathfrak{c}$ many tilings can arise is by "shifting" of a part of the tiling, akin to the one just described. A more general way to look at these tilings is to say that there is a subset of the plane which can be tiled in $\mathfrak{c}$ many ways, and that can be extended into a tiling of the whole plane, all of which are distinct.

A lot of tiles, however, exhibit a more combinatorially rich behavior. To see this, observe the tile shown in Figure \ref{combcont} \cite{TIP}. Note that there are two "row-like" structures that this tile can make: the one in which the tiles are "pointing" towards the left (orange in Figure \ref{combcont}) and the one in which tiles are "pointing" to the right (ocher-greyish in Figure \ref{combcont}). Notice next how both the boundary of the orange row (blue in Figure \ref{combcont}) and the boundary of the ocher-greyish row (yellow in Figure \ref{combcont}) are the same. What this means is that the rows influence the tilings in the same way, regardless of the orientation of the tiles in them. It is then easy to observe that there is a bijective correspondence between tilings by this tile (at least those that consist of "rows" like the ones shown here) and bi-infinite sequences of letters $A$ and $B$, of which there are $2^{\aleph_0} = \mathfrak{c}$ many distinct ones.

\begin{figure}[h!]
\centering
\includegraphics[width=8cm]{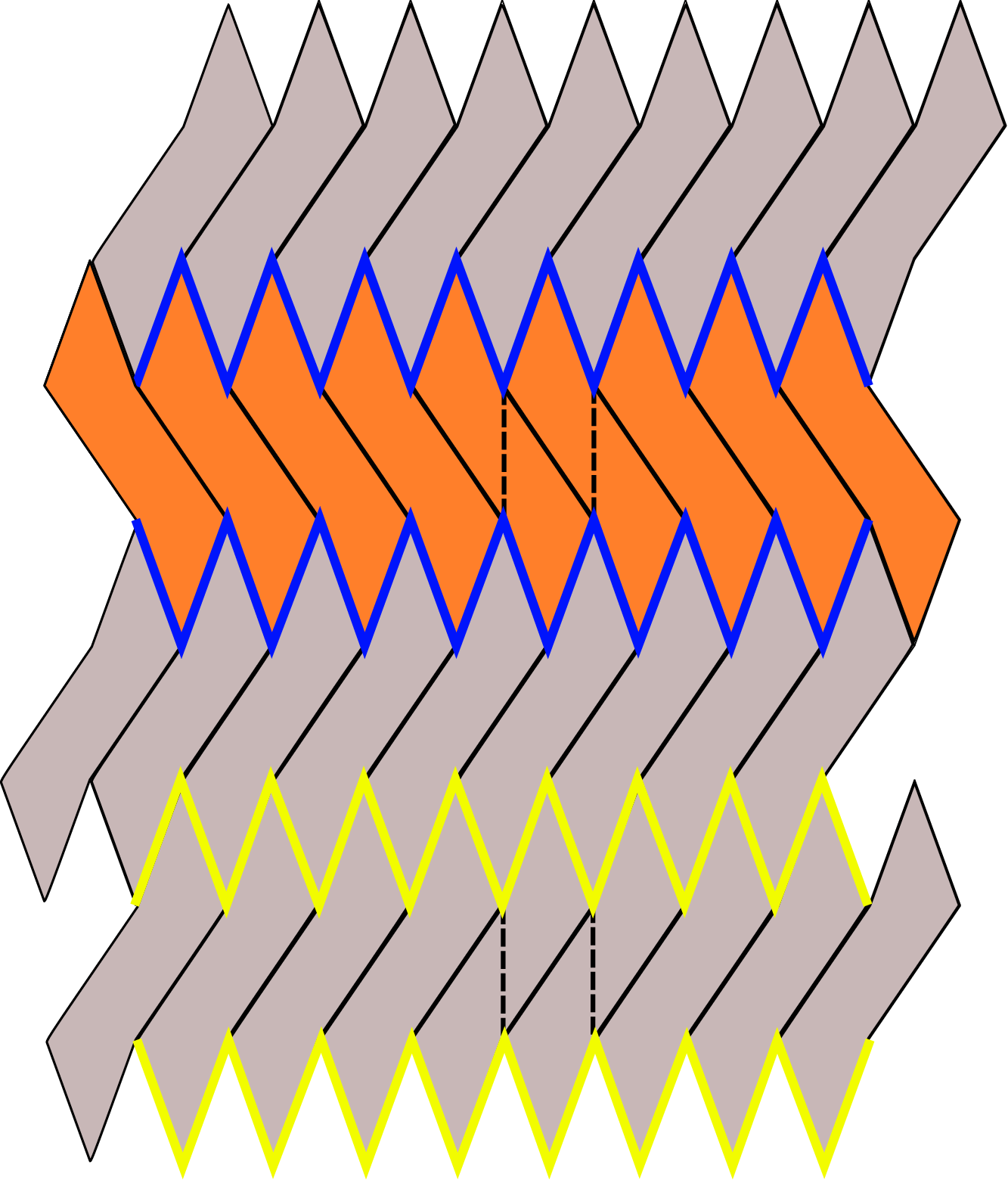}
\caption{ A $\mathfrak{c}$-morphic tile. Note that the blue outline is the same as the yellow one, allowing the orange tiles to be oriented in a different direction, without requiring any change in the rest of the tiling. This freedom of choice at every layer gives rise to $\mathfrak{c}$ many tilings. }\label{combcont}
\end{figure}

Almost all the so far found $\mathfrak{c}$-morphic tiles are enforcing these two structures and it is for this reason that the tiles shown in Figure \ref{fig myerspriv} are very interesting. They were found by Joseph S. Myers \cite{myerspriv}, and exhibit a hierarchical structure to their tilings. The hierarchy is built of larger and larger nested triangular structures, such as the one shown in Figure \ref{fig hierarchy}. To see why this structure implies the existence of $\mathfrak{c}$ many tilings, observe first that each triangle (of any size - for instance, the smallest one,  which is any three incident tiles colored in cyan in Figure \ref{fig hierarchy}) can be any of the four sub-triangles of a twice larger triangle: it can be in the top, middle, left or right, which we will denote by positions $1,2,3$ and $4$, in that order. The way in which we obtain a tiling of the plane is by extending this triangle into ever larger and larger triangles, with which we associate an infinite sequence of numbers $1,2,3,4$ in which each of $1,3,4$ appears infinitely often, of which there are $\mathfrak{c}$ many. For instance, the sequence $1,2,3,...$ would mean that the smallest triangle $T_1$ is at the corner $1$ of the larger triangle $T_2,$ $T_2$ is in the center of the triangle $T_3,$ which is in the corner $3$ of the larger triangle $T_4$ and so on. For a given hierarchical tiling, and a given starting triangle (comprising of $3$ incident cyan tiles), the sequence of nestings starting with that triangle and which result in the given tiling is unique and can be discerned from the tiling. As there are only countably many potential starting triangles in any tiling, each tiling will be counted in this way only by a countable number of allowed sequences, but seeing how there are $\mathfrak{c}$ many tilings in total (as they correspond to the sequences with infinitely many elements $1,3$ and $4$), the number of distinct tilings (which we obtain by "dividing" all the tilings by the number of times they each appear in this counting scheme), is still uncountable.

\begin{figure}[h!]
        
        \begin{subfigure}[b]{0.5\textwidth}
            \centering
            \includegraphics[width=5cm]{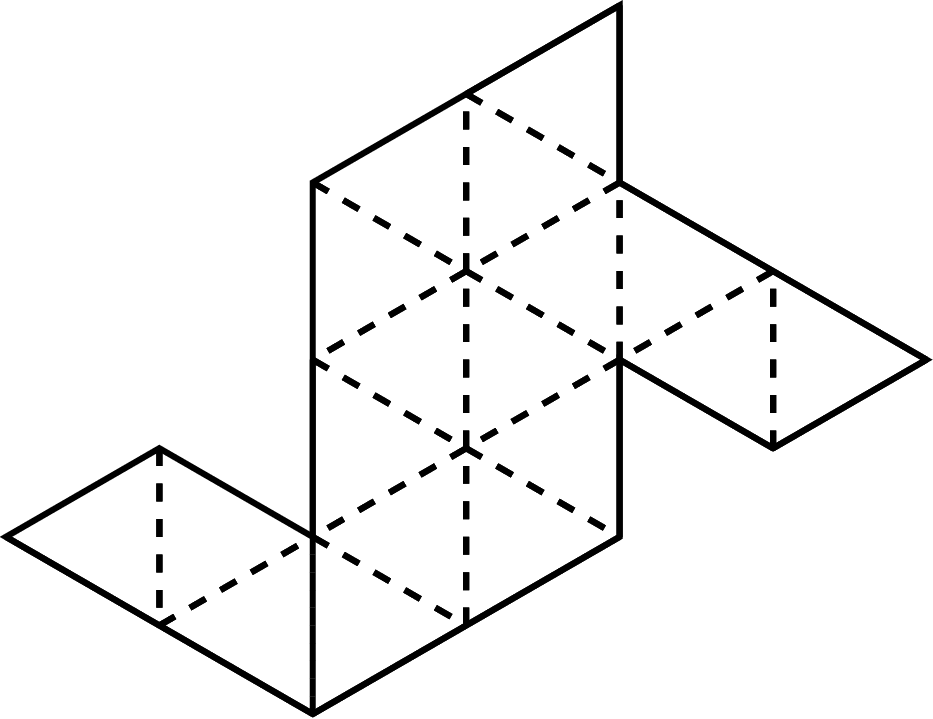}
            \subcaption{}
            \label{fig:arm1}
        \end{subfigure}
        \begin{subfigure}[b]{0.5\textwidth}
            \centering
            \includegraphics[width=5cm]{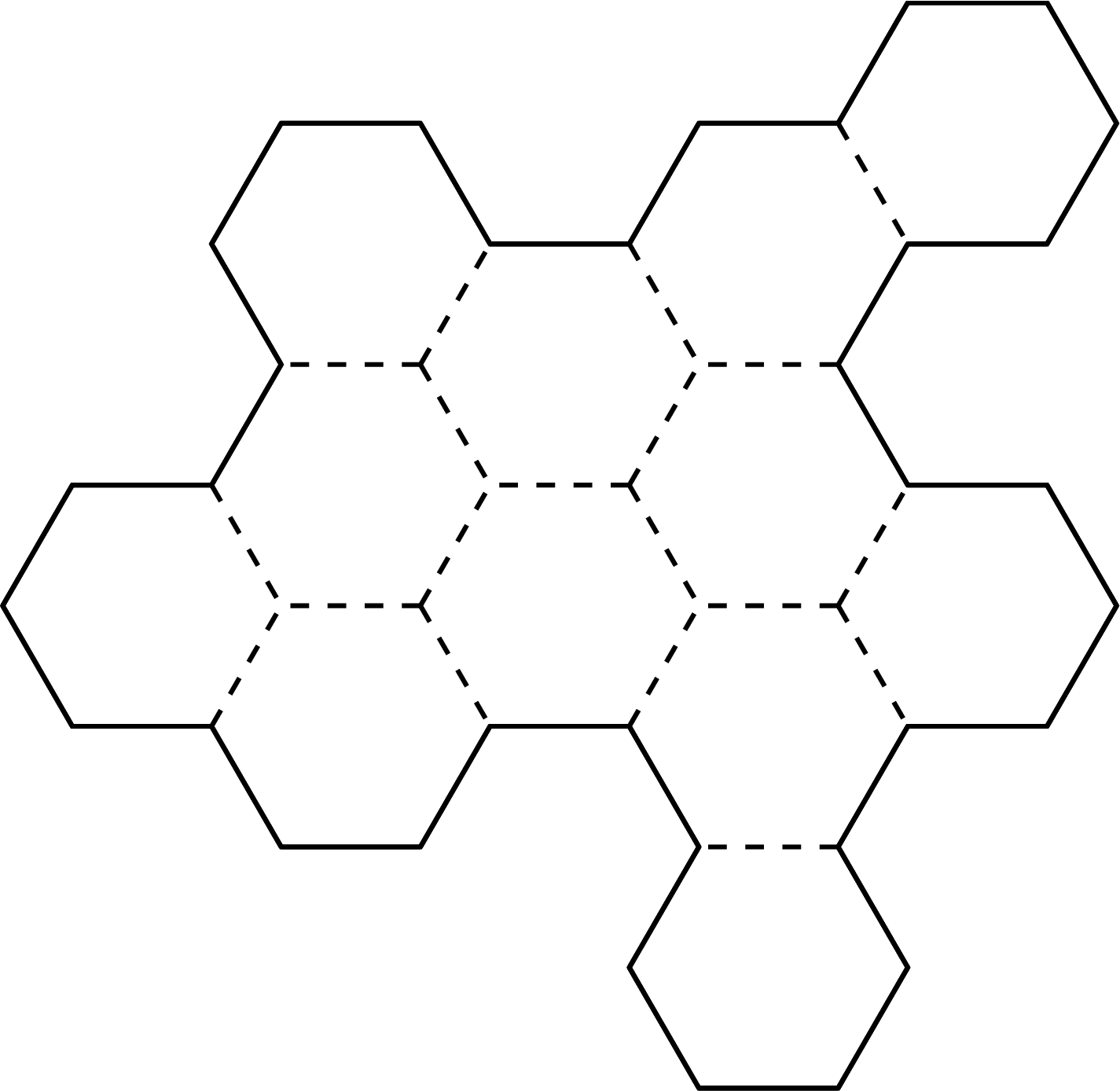}
            \subcaption{}
            \label{fig:arm2}
        \end{subfigure}
        \caption{Two tiles that exhibit a hierarchical structure to their tilings. Courtesy of J.S. Myers \cite{myerspriv}.}
        \label{fig myerspriv}
\end{figure}

\begin{figure}[h!]
\centering
\includegraphics[width=8cm]{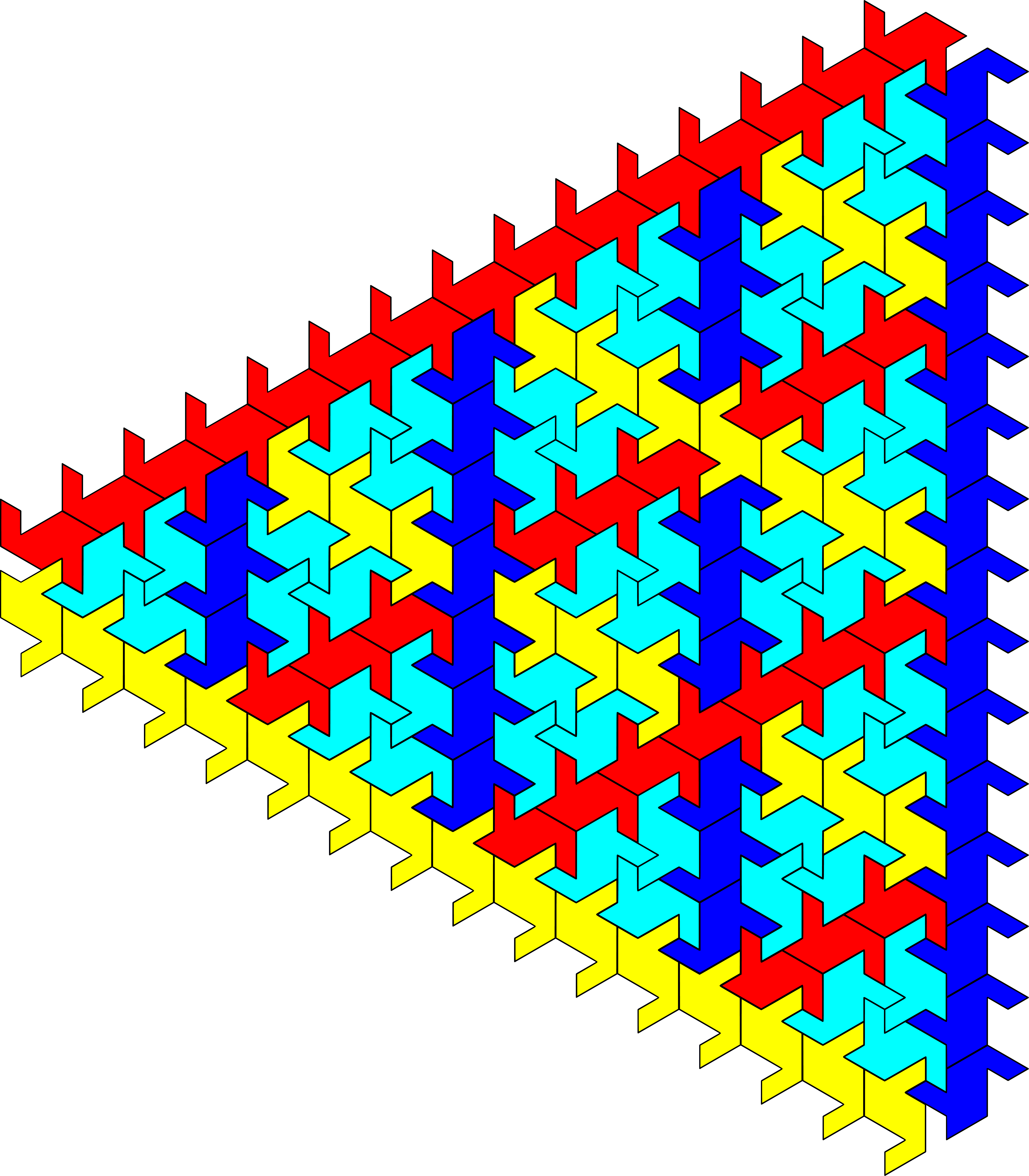}
\caption{ An "equilateral triangle" of "third order". To obtain triangles of larger order, one can extend this triangle as the smaller ones comprising it were, and in doing so we can make this triangle be any of the $4$ subtriangles of the next one, and also for the next and next, giving rise to $\mathfrak{c}$ many tilings. }
\label{fig hierarchy}
\end{figure}
The hierarchical structure exhibited in the previous example is common in aperiodic sets of tiles, but these examples are remarkable for we know of only one family of aperiodic tiles, all of which behave in an analogous manner to each other \cite{spectre, hat}.

Worthy of notice are some other ways in which a number of tilings (this time for a protoset in the first two references and in general in the rest) can be proven to be $\mathfrak{c}$. Kari \cite{kari} showed that to each real number $\alpha \in [\frac{1}{2}, 2]$ corresponds a tiling by his protoset of Wang tiles, thus giving the continuum many tilings. The way in which he proved this was by emulating the behavior of Mealy machines that multiply Beatty sequences of real numbers by rational coefficients. See also \cite{jeandelrao} for more. Dolbilin \cite{dobi}, Danzer and Dolbilin \cite{danzer} (see also \cite{senechal}, section 7.6.2 for Radin's unpublished proof) showed that $\textit{any}$ aperiodic protoset admits $\mathfrak{c}$ many tilings, a contraposition of which says that any $\sigma$-morphic protoset necessarily admits a periodic tiling! The way in which they prove this result is especially interesting since they do not rely on the hierarchical structure of tilings, but rather use measure theory or rely on some deeper structural results.

\begin{figure}[b!]
\centering
\includegraphics[angle=90,origin=c, height=10cm]{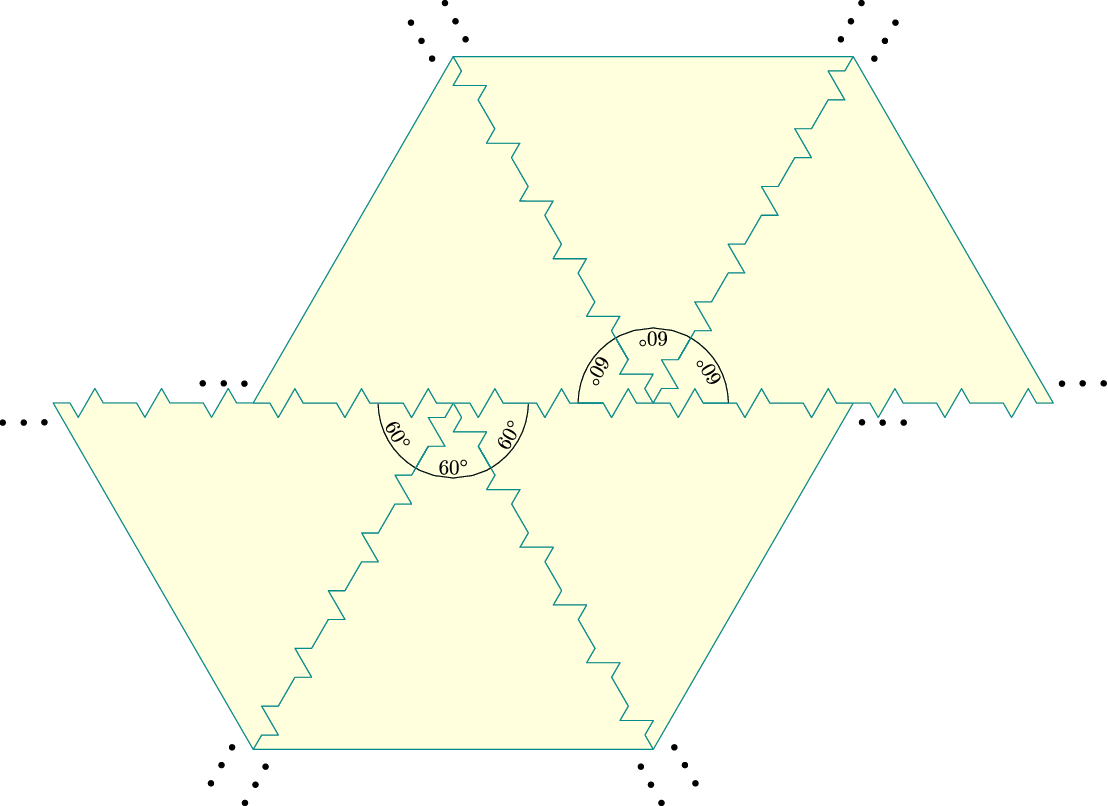}
\caption{One of $\aleph_0$ many distinct tilings by the infinite tile discussed in the text. All the other tilings are obtained by discretely shifting the three upper tiles relative to the three lower tiles in this figure.}
\label{fig: diftilings}
\end{figure}

\subsection{Different ways of obtaining $\aleph_0$ many tilings}

Next we survey the main ideas used in the literature of obtaining $\aleph_0$ tilings. One of the prevalent ideas of doing this is to enforce a larger structure to emerge which will then (as an unbounded tile, say) be $\sigma$-morphic. So far there has been essentially only one way of doing this, which is shown in Figure \ref{fig: diftilings}. In it, we are assuming that we are tiling the plane with the infinite sections of the plane, $6$ of which are shown. It then isn't hard to argue that the structure similar to the one shown in Figure \ref{fig: diftilings} is forced, with the only difference that the vertices at which the apexes of $3$ tiles meet are a different distance apart. This distance can in turn only be a natural number, due to the symmetric protrusions on the edges of the big tile, which are "discretizing" the boundary of the tile, intuitively speaking. And for each natural number $n$ we obtain a distinct tiling which leads to the conclusion that this infinite tile is $\sigma$-morphic. What is left to be done is to somehow force this bigger structure to emerge in a tiling, and in such a way that each tile is a part of at least one such infinite tile. Schmitt managed to do this with a protoset of two tiles, \cite{schmitttrougao}, and Ba\v si\' c, D\v zuklevski and Slivkov\' a \cite{mi} managed to do it with generalized matching rules (note that Schmitt's construction can also be translated into a single tile with generalized matching rules, as was done in \cite{mi}, section 5.2 for polymorphic tiles, but the construction presented in \cite{mi} is different).

\begin{figure}
        
        \begin{subfigure}[b]{0.5\textwidth}
            \centering
            \includegraphics[width=5cm]{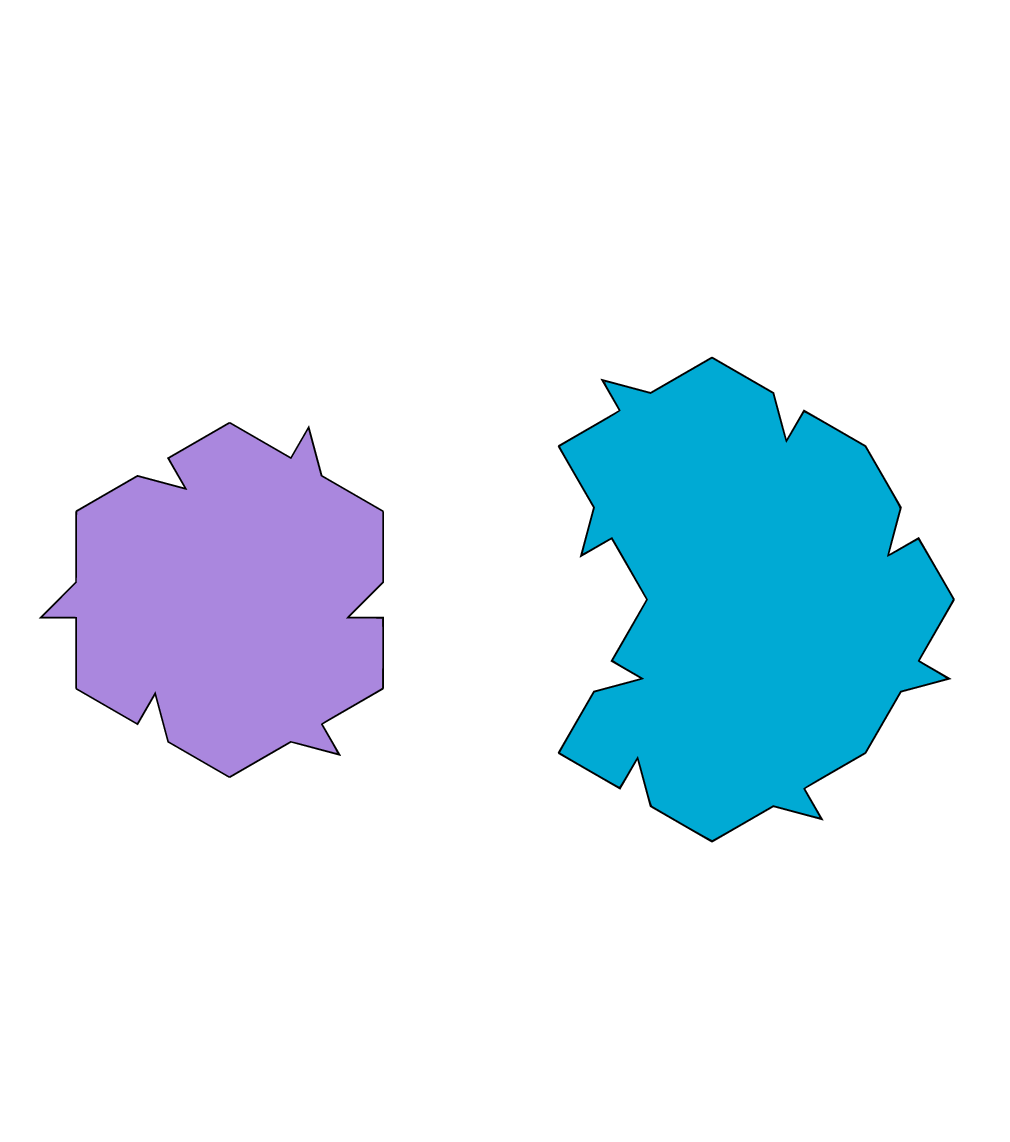}
            \subcaption{}
            \label{fig:arm1}
        \end{subfigure}
        \begin{subfigure}[b]{0.5\textwidth}
            \centering
            \includegraphics[width=6cm]{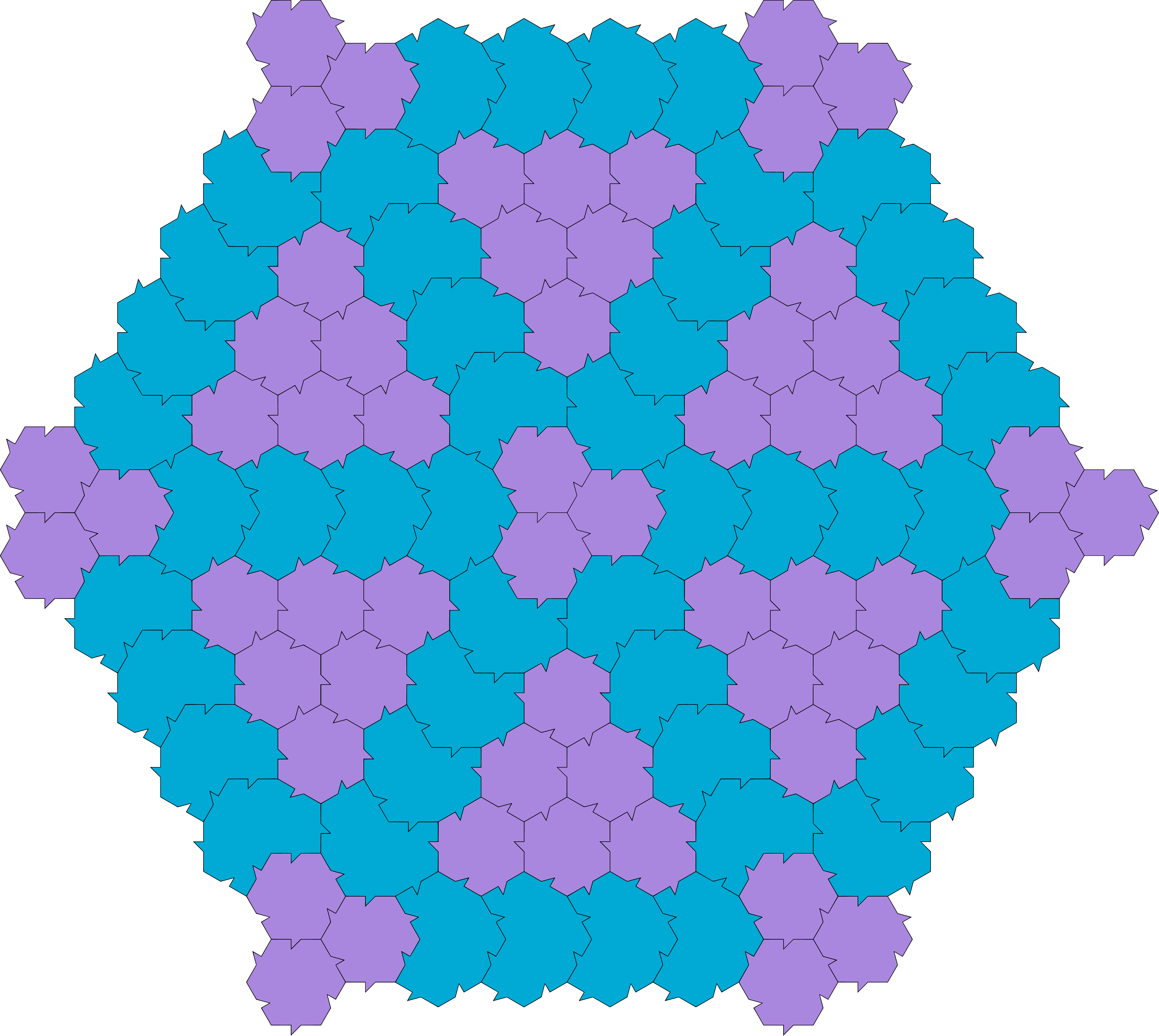}
            \subcaption{}
            \label{fig:arm2}
        \end{subfigure}
        \caption{(a) The $\sigma$-morphic protoset discovered by Schmitt \cite{schmitthierarchy}. (b) A patch from one of the tilings admitted by this protoset. All but a finite amount of "anomalous" tilings are obtained in a similar fashion, by taking hexagons like this one which are comprised of triangles containing $4,5,6,...$ purple tiles for their edges instead of $3$ as depicted here.}
        \label{fig schmitt2}
    \end{figure}

Another, brilliant, idea of Schmitt's is not to force $\aleph_0$ many tilings by relative positions of some larger structures in a tiling, but by creating tilings from larger and larger structures. To see what we mean by this, assume that we can construct a set of tiles that could join together to form a larger tile, say of area $P,$ which is monomorphic. Assume further that there is a different way in which we could join those tiles so as to obtain a tile similar to this one, but whose area is, say, twice as large. And assume further that we could do this for any integer $n$: for each integer $n$ we can join the tiles from the given protoset to obtain a tile similar to the original, monomorphic one, but whose areas are $nP$ (it's only important that the multiples are discrete, i.e. that the whole sequence is countable). If the shape is such that no two of these tilings are the same, we will obtain $\aleph_0$ many distinct tilings. The idea is to somehow force this to happen so that no other tilings are possible and the protoset is then $\sigma$-morphic. Schmitt indeed managed to do this \cite{schmitthierarchy} and one of the protosets he discovered is shown in Figure \ref{fig schmitt2} (a). The hexagonal structure in Figure \ref{fig schmitt2} (b) can be extended into a unique tiling, and we can form larger and larger structures akin to this one, but with triangles of purple tiles having $4,5,6,...$ tiles at their sides. It can be shown that there are a few "anomalous" tilings together with these, but the total amount is still countable.

\section{The main section}\label{sec4}

In this section we present a protoset that forces $\aleph_0$ many tilings in a way different to those described in the previous section, and also show how a $\sigma$-morphic protoset consisting of convex polygons can be constructed.

\subsection{A new $\sigma$-morphic protoset}\label{4.1}

\begin{thm}
    The protest shown in Figure \ref{fig p3} \textup{(a)} is $\sigma$-morphic.
\end{thm}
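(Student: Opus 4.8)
The plan is to mimic Schmitt's hierarchical argument from Section~\ref{sec2}, but to track carefully how each of his two prototiles has been subdivided into convex pieces, and to argue that this refinement does not introduce any new tilings beyond those inherited from Schmitt's protoset. First I would set up the correspondence: let $\mathcal{S}$ denote Schmitt's $\sigma$-morphic protoset of Figure~\ref{fig schmitt2}~(a), and let $\mathcal{P}$ denote the new protoset of Figure~\ref{fig p3}~(a). I would exhibit, for each prototile $S \in \mathcal{S}$, an explicit dissection of $S$ into finitely many members of $\mathcal{P}$, and I would note that every tiling of the plane by $\mathcal{S}$ induces a tiling by $\mathcal{P}$ via this dissection. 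Since Schmitt proved that $\mathcal{S}$ admits exactly $\aleph_0$ noncongruent tilings, this immediately gives \emph{at least} $\aleph_0$ noncongruent tilings by $\mathcal{P}$ (the dissection being canonical, noncongruent $\mathcal{S}$-tilings yield noncongruent $\mathcal{P}$-tilings).

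The substance of the proof is the reverse inclusion: every tiling by $\mathcal{P}$ must arise from a tiling by $\mathcal{S}$ in this way, i.e.\ the convex pieces are \emph{forced} to group themselves back into Schmitt's two tiles. Here I would argue locally. The bumps and nicks that Schmitt uses to enforce his matching rules survive on the boundary of the union-tiles; the point of the clever subdivision is that the \emph{internal} edges created by cutting $S$ into convex pieces can only be matched in one way, because each convex piece carries, on its cut edges, a shape (a sub-bump or a partial notch, or simply an edge length/angle combination) that has a unique mate among the edges of the other convex pieces. Concretely, I would enumerate the convex prototiles in $\mathcal{P}$, list the edges of each together with their lengths and the interior angles at their endpoints, and check (a finite case analysis) that the only way two such edges can be glued face-to-face in a tiling is the way they appear inside some $S \in \mathcal{S}$. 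This ``rigidity of the dissection'' reduces any $\mathcal{P}$-tiling to an $\mathcal{S}$-tiling, and then Schmitt's theorem finishes the count.

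I would organize the write-up in three steps: (1) describe the dissection and record that $\mathcal{S}$-tilings push forward to $\mathcal{P}$-tilings, citing \cite{schmitthierarchy} for the fact that $\mathcal{S}$ is $\sigma$-morphic; (2) prove the edge-matching rigidity lemma by the finite case analysis sketched above, concluding that every $\mathcal{P}$-tiling is the push-forward of an $\mathcal{S}$-tiling; (3) observe that the push-forward map is injective on congruence classes, so the number of noncongruent $\mathcal{P}$-tilings equals the number of noncongruent $\mathcal{S}$-tilings, namely $\aleph_0$.

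The main obstacle I anticipate is step~(2): one must choose the dissection so that \emph{no} convex piece can ``escape'' its intended cluster, and one must rule out tilings in which the convex pieces assemble into shapes that are not unions of Schmitt tiles at all --- for instance long strips, or clusters straddling what should be a bump-and-nick interface. Ensuring this may require that the cuts be made asymmetrically (so that a piece and its mirror image are distinguishable, preventing spurious reflective matchings) and that the edge lengths introduced by the cuts be incommensurable with the ``feature'' lengths of Schmitt's bumps, so that a cut edge can never align with a bump edge. Verifying that such a choice of dissection exists and does the job --- essentially a bookkeeping argument over all edge-pairs, but one that has to be done with care --- is where the real work lies; the rest is transport of structure plus an appeal to Schmitt's theorem.
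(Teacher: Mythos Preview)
You have misidentified the theorem. Figure~\ref{fig p3}~(a) is \emph{not} the convex recomposition of Schmitt's pair; it is a new protoset of three (non-convex) prototiles with their own bumps and nicks, and the paper proves the result for it directly, with no reference to Schmitt's construction. The convex protoset obtained by recomposing Schmitt's tiles is a different object (Figure~\ref{convex}), treated in the subsequent theorem.

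Consequently your entire strategy---push forward Schmitt's tilings through a dissection, then prove a rigidity lemma forcing the convex pieces to reassemble into Schmitt's tiles---addresses the wrong statement. The paper's actual proof is a short, self-contained row analysis: the lateral bumps and nicks force each horizontal row to consist of translates of a single prototile; the top edge of the ``purple'' tile forces all rows above a purple row to be purple; a ``green'' row must have infinitely many purple rows above it and either another green row or a ``red'' row below it; and the bottom edge of a red row admits the same two continuations. Enumerating the resulting global patterns yields: one all-purple tiling, one all-red tiling, one ``two green rows meeting'' tiling, one ``green above infinitely many red'' tiling (and its reflection), and for each $n\geqslant 0$ a tiling with exactly $n$ red rows sandwiched between two green rows---countably many in total. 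No recomposition or appeal to \cite{schmitthierarchy} is involved.
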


\begin{proof}
    Observe first that the lateral bumps and nicks of each of the three tiles force the tiles in the same row to all be the translates of each other. Also observe that the upper side of the purple tile forces all the tiles above it to be its translates. If the only tiles that appear in a tiling are purple ones, then the tiling is easily seen to be unique. Next, observe that the row of green tiles has to be followed (in the upward direction) by an infinite number of purple rows. The lower side of the green row can either be accommodated by the other green row (in which case the whole tiling is forced), or by a red row. In the latter, the same analysis holds since the lower sides of red and green tiles are the same. This gives rise to two options: either all the rows below the green one are red, or there are finitely many red followed by a green and infinitely many purple (but "pointing downwards" this time). It is trivially seen that we can have an arbitrary (finite) number of red layers in these tilings, giving rise to $\aleph_0$ many tilings. If no green tile is used, the only other option is the one in which only the red tiles are used, and there is only one such tiling. This completes the proof.
\end{proof}

It is worth noting that the author is at present not aware of any protoset of $2$ prototiles which can force the same structure to emerge. The reason why this is hard to construct is that one seems to need the second tile from Figure \ref{fig p3}(a) in order to make a transition from red to purple tiles. Whether this can be avoided in any way and the number of tiles taken down is an interesting open problem.

\begin{figure}
\begin{subfigure}{.5\textwidth}
  \centering
  \includegraphics[width=7.5cm]{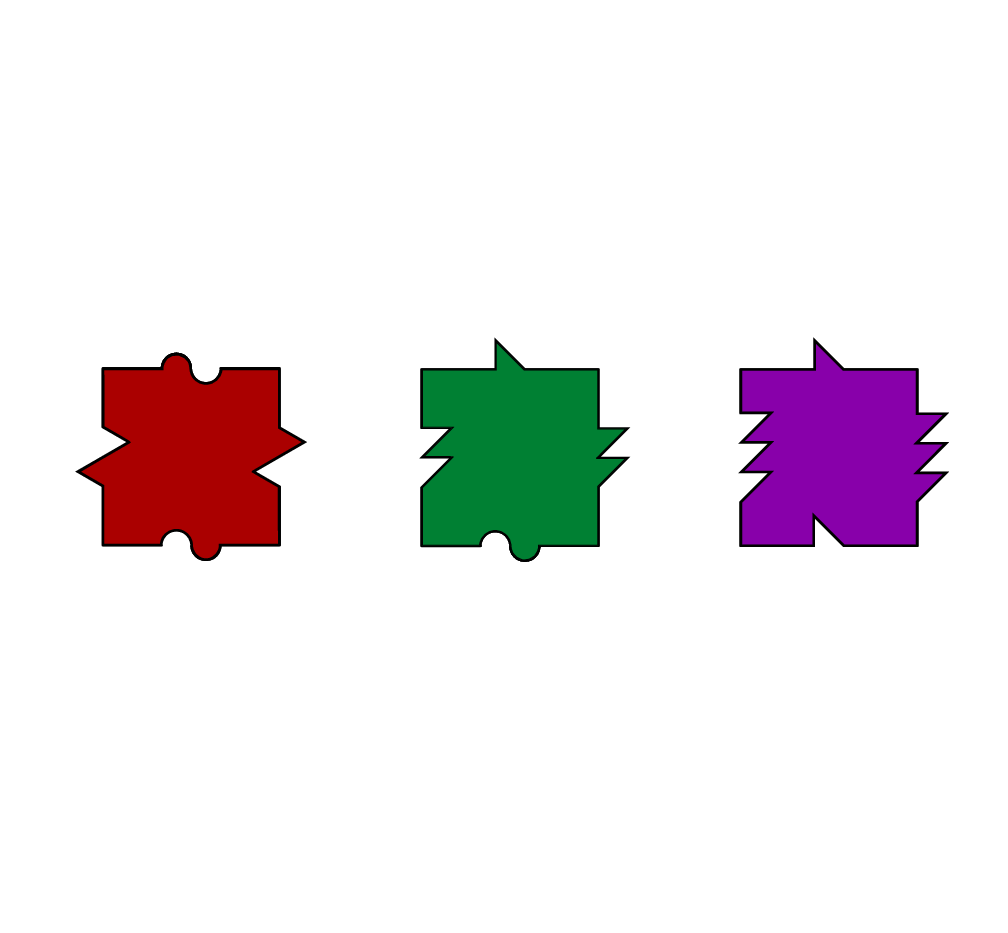}
  \caption{  }
  \label{fig:sfig1}
\end{subfigure}
\begin{subfigure}{.5\textwidth}
  \centering
  \includegraphics[width=7.5cm]{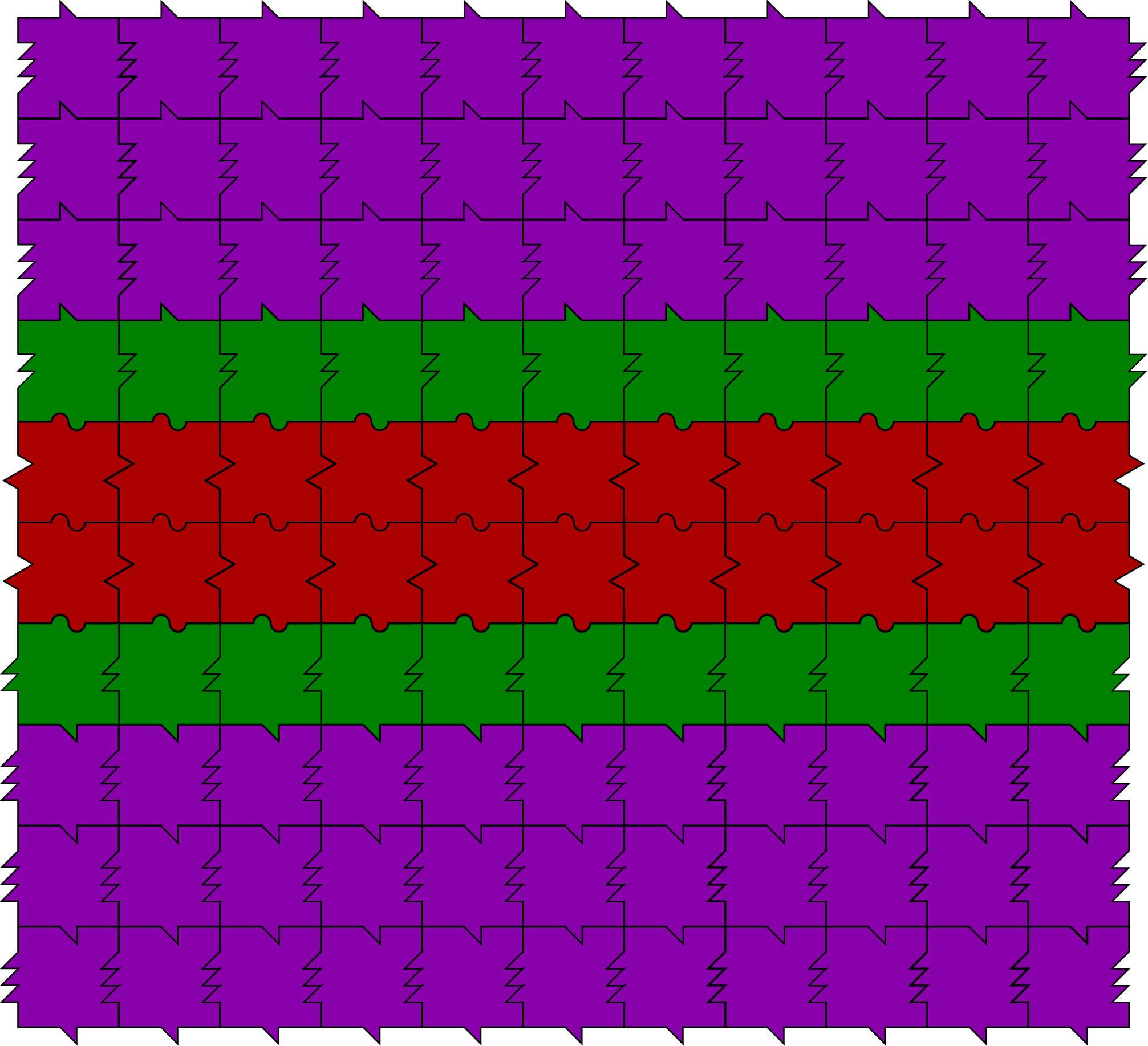}
  \caption{ }
  \label{fig:sfig2}
\end{subfigure}
\caption{(a) A protoset of $3$ prototiles that is $\sigma$-morphic. (b) One of the tilings admitted by this protoset. All but two tilings are obtained by varying the number of rows of the burgundy tiles, which can take a value of any non-negative integer.}
\label{fig:fig}
\label{fig p3}
\end{figure}

\subsection{Convex protoset}\label{sec4.2}
As we saw in Section \ref{sec2}, the main geometric tool for forcing the tiles to obey the matching rules that we put on them is by the means of bumps and nicks (c.f. for instance Figure \ref{fig schmitt2}). It is for this reason that restricting to convex tiles is an interesting variation that is often considered in the literature. For instance, a problem of the existence of a convex tile whose Heesch number is larger than $1$ yet finite is still open \cite{mannprez}. Results of Amman, Danzer, and Gr{\"u}nbaum and Shephard \cite{TIP}, page 549, are, however, much more in line with what we are trying to achieve. They managed to construct sets of tiles which have $3,$ $9$ and $6$ convex polygons, respectively, which are aperiodic. What's more, they were obtained by \textit{recompositions} of various versions of Penrose tiles \cite{penrose} and behave in exactly the same way. Following $\cite{TIP}$, we say that a protoset $\mathcal{T}_1$ is obtained by \textit{recomposition} from the protoset $\mathcal{T}_2$ if it is possible to mark the tiles of $\mathcal{T}_1$ in such a way that in \textit{any} tiling $T_1$ admitted by $\mathcal{T}_1$ the marks on the tiles are the vertices and edges of \textit{some} tiling admitted by $\mathcal{T}_2$.  The prototset from Figure \ref{convex} is obtained by recomposition of the protoset from Figure \ref{fig schmitt2}. We continue the discussion in details below.

Before that we note that a convex tile that is $\sigma$-morphic cannot exist. Indeed, each triangle and quadrangle tile the plane and it is easy to see that each triangle is $\mathfrak{c}$-morphic while quadrangles can either be monomorphic or $\mathfrak{c}$-morphic as well. The story of the discovery of the fifteen classes of convex pentagons that tile the plane is an interesting one, culminating in the discovery by Mann, McLoud-Mann and Von Derau \cite{mannpentagon} of the fifteenth and supposedly final such class (Rao \cite{rao} has announced a computer-assisted proof of this claim in 2017 but it has not yet been peer-reviewed and published; the consensus of the community is, however, that the list is now complete); see \cite{mannpentagon, schatesneider} and references therein for the rich history of the discoveries of the fifteen classes. It is again not hard to see that each pentagon belonging to some of those classes is either polymorphic or $\mathfrak{c}$-morphic. 
There are also only three families of convex hexagons that tile the plane \cite{adams, ReinhardtPhD} and all of them are monomorphic. Finally, it is a well known theorem that no convex $n$-gon for $n\geqslant 7$ tiles the plane \cite{TIP, niven, Reinhardtconv}.

We return back to the announced theorem:

\begin{thm}
There exists a protoset $\mathcal{P}$ consisting of convex prototiles that is $\sigma$-morphic.
\end{thm}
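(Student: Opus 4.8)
The plan is to realize $\mathcal{P}$ as a \emph{recomposition} of Schmitt's $\sigma$-morphic protoset $\mathcal{S}$ from Figure \ref{fig schmitt2}(a): I would cut each of the two non-convex prototiles of $\mathcal{S}$ into finitely many convex polygons, mark the resulting cut-edges, and argue that the finite family $\mathcal{P}$ of convex pieces so obtained behaves combinatorially exactly as $\mathcal{S}$ does. Concretely, this amounts to describing the subdivision drawn in Figure \ref{convex}: the bumps and nicks decorating the edges of Schmitt's tiles are small polygonal features, and each bump is designed to be filled by a matching nick of a neighbouring tile; I would slice off every bump as its own convex sub-tile and cut the remaining ``body'' of each Schmitt tile along chords into convex polygons (triangles and convex quadrilaterals), the chords chosen so that every piece is convex and so that the marks recording the original boundary arcs and the cuts are carried along. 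Since $\mathcal{S}$ has only two prototiles with finitely many bumps and nicks, $\mathcal{P}$ is a finite protoset of convex polygons. (One could equally well recompose the three-tile protoset from Subsection \ref{4.1}, but Schmitt's protoset is the cleaner choice, since its hierarchical forcing mechanism survives the passage to chords with the least amount of new case-checking.)

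The easy half of the argument is the lower bound. Every tiling $\mathcal{T}$ admitted by $\mathcal{S}$ yields a tiling $\widehat{\mathcal{T}}$ by $\mathcal{P}$, obtained by subdividing each tile of $\mathcal{T}$ according to the fixed recipe; conversely, erasing the cut-marks and merging the pieces recovers $\mathcal{T}$ from $\widehat{\mathcal{T}}$. Both operations commute with isometries of the plane, so $\mathcal{T}\mapsto\widehat{\mathcal{T}}$ descends to an injective map on congruence classes of tilings, and since $\mathcal{S}$ admits $\aleph_0$ non-congruent tilings (Section \ref{sec2}), $\mathcal{P}$ admits at least $\aleph_0$ non-congruent tilings.

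The hard half, and the main obstacle, is the converse: showing that \emph{every} tiling by $\mathcal{P}$ arises in this way, i.e.\ that in any tiling by the convex pieces the pieces are forced to regroup into Schmitt's two tiles, so that the marks form a tiling admitted by $\mathcal{S}$. Here there are no concavities to do the forcing, so the rigidity must be engineered into the metric data of the pieces: I would choose the lengths of the cut-edges, and the angles at which the cuts meet the original edges, to be ``generic'' --- pairwise distinct wherever possible, and with no subset of the angles occurring in $\mathcal{P}$ summing to $\pi$ or to $2\pi$ except in the intended ways. A local analysis of the possible vertex figures and edge-to-edge adjacencies then shows that each convex piece can only be adjacent to the pieces it is adjacent to inside a Schmitt tile, and that a bump-piece can only be seated in the corresponding nick-slot; bootstrapping this from a single piece across the plane shows that a $\mathcal{P}$-tiling is precisely a subdivision of an $\mathcal{S}$-tiling. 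Hence $\mathcal{T}\mapsto\widehat{\mathcal{T}}$ is a bijection on congruence classes, the number of non-congruent tilings by $\mathcal{P}$ equals that for $\mathcal{S}$, namely $\aleph_0$, and $\mathcal{P}$ is $\sigma$-morphic. The delicate point throughout will be verifying that the chosen subdivision admits no ``spurious'' local configuration of convex pieces --- this is exactly the step that fails for naive cuts and for which the construction in Figure \ref{convex} must be genuinely clever.
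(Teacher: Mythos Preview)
Your overall strategy---recompose Schmitt's protoset into convex pieces and force regrouping via generic metric data---is the paper's strategy. But two concrete choices in your sketch would break the proof.

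First, you propose to ``slice off every bump as its own convex sub-tile'' and cut the bodies into ``triangles and convex quadrilaterals''. Any triangle is $\mathfrak{c}$-morphic on its own, and a generic convex quadrilateral tiles the plane as well; once such a piece is in $\mathcal{P}$, the protoset admits $\mathfrak{c}$ tilings regardless of what the other pieces do, and the upper bound fails. The paper avoids this entirely: it does \emph{not} separate the bumps as standalone tiles. Instead it first replaces each bump/nick by a pair of edges meeting at a carefully chosen angle $\alpha$ (so a bump becomes a single convex corner and a nick a single reflex corner), and only then cuts the resulting polygon into convex pieces by drawing chords through the reflex vertices. The pieces obtained are pentagons and hexagons---large enough that one can argue they do not tile except as parts of the intended Schmitt tiles.

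Second, you do not address symmetry. Schmitt's left prototile has nontrivial symmetry; if you subdivide it asymmetrically, then at every occurrence in a tiling you may independently choose one of the symmetric orientations of the subdivision, and this alone produces $\mathfrak{c}$ tilings of $\mathcal{P}$ lying over a single $\mathcal{S}$-tiling. The paper makes a point of subdividing that tile symmetrically for exactly this reason. Your generic-angle heuristic (``pairwise distinct wherever possible'') would in fact push you toward an asymmetric cut, which is fatal here. The vertex-figure analysis you outline is otherwise the right shape, but it only goes through once the pieces are big enough not to tile alone and the symmetric tile is cut symmetrically.
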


\begin{proof}

We will show that the protoset consisting of distinct polygons shown in Figure \ref{convex} has the desired property. The main idea is to choose the angles and sides of the convex polygons in such a way that the only way in which any particular vertex of any of those tiles could be surrounded would be to force the configurations from Figure \ref{convex}. For instance, the angles of the lowermost three tiles into which the right tile from Figure \ref{convex} is divided are such that the only way in which the angle of tile $4$ that is incident to the vertex $Q$ can be surrounded fully is in the way that is shown in the figure.

Since we are aiming to construct the smallest protoset with this property, we will recompose tiles from the Schmitt's protoset (Figure \ref{fig schmitt2} (a)) rather than our own from the previous subsection (though we will show how it can be done). The first step is to modify the bumps and nicks present at the original tiles in a manner shown in Figure \ref{convex}. To do this, we exchange the edge with a bump on it with the two edges that are making an angle $\alpha$ above that original edge and that are proturbing away from it, and similarly for the nicks; cf. Figure \ref{convex}. We have to make sure that they are still asymmetric and that the bump can only be accommodated by a nick and vice versa. The way we do this is to take the angle $\alpha$ of the new bumps and nicks to be unique among the angles (and their sums) of tiles in $\mathcal{P}$. We will have some additional requirements on $\alpha$ that will come from the forcing of the desired configurations that we mentioned, but we will focus on those later.

In the following, we will ensure that no two consecutive edges of the obtained tiles, that are not the edges of the original tiles (but with modified bumps and nicks), are of the same length (with the exception of tile 1, which we will consider separately). Also, we don't want any number of angles to sum up to $\theta$ or $180^{\circ}-\theta$, for any inner angle $\theta$ of any of the tiles in $\mathcal{P}$. The reason for introducing these constraints is that to prove that the configurations around vertices will be forced to be equal to those shown in Figure \ref{convex} as long as at least one tile is present, we have to prove that the right tiles (with adequate angles at the vertex) will be present around the vertex, that they will be in the same order around the vertex as shown, and that no tile will be of the opposite orientation to that shown in Figure \ref{convex} (i.e. flipped). We first have to make sure that the correct tiles appear around the vertex, which we do either by letting the angles be such that they only partake in one sum that equates to $360^{\circ}$, or by a bit more complicated argument, but we'll leave that for later. Having forced the correct tiles to appear around each vertex, to see why the order of the tiles will have to coincide with that shown in Figure \ref{convex}, take again for example the vertex $Q$ from Figure \ref{convex}. The same analysis will work for all the vertices in the interior of the original tiles (which are the only ones with which we are concerned) since they are all $3$-valent. Notice that the edge $QS$ is shorter than the edge $QT$, and notice also that a similar statement holds for other vertices since no two consecutive inner edges of any tile are of equal length. Therefore, among the edges $QR, QS$ and $QT$ there are no two of the same length (for otherwise some tile would have two consecutive edges of the same length). Assuming now the contrary, that the tiles could be in some other order around this vertex (the other one since there are $2$ in total once the tiles that participate in the surrounding are fixed), we observe that the tile $6$ has to touch tile $5$ along its edge $QR$, since otherwise the edges of tiles $5$ and $6$ that are touching would be of different length. But since $QS,QT$ and $QR$ are all of different lengths, we would have an angle equal to $180^{\circ}-\theta$ present, which cannot be filled due to our choice of angles. Now, once the order of the tiles is forced, we have to show that each tile will be in the exact orientation as shown in Figure \ref{convex}. But this easily follows from the lengths of consecutive inner edges being different and the requirement on angles, using an analogous argument as before.

Let us now describe the way in which we ensure these two conditions (on the angles and lengths of edges). We shall first describe the way in which the convex polygons are obtained and then show that they indeed fulfill the two conditions. Focusing on the left original tile first, we divide it by connecting some of the vertices of the original tile with the center of the red hexagon, cf. Figure \ref{convex}. The angle $\alpha$ of the bumps and nicks is chosen such that the only sum that equates to $360^{\circ}$ and that contains $\alpha, \beta$ or $\gamma$ in it is exactly $\alpha + \beta + \gamma  = 360^{\circ}.$ Also, the edges $ZW$ and $ZZ'$ have to be of different lengths (and note here that the position of vertex $Z$ relative to the vertices of the original tile -- including $W$ -- is determined by the angle $\alpha$), which we will show is possible to do later. For now, we suppose that $\alpha$ has been chosen and proceed in our discussion, and will retroactively explain all the details. As for dividing the right tile from Figure \ref{convex}, we connect points $W$ and $Z$ and we place two points $Q$ and $R$ such that the angles that the vertices make with points $S,T$ and $U,V$, respectively, are unique among the angles, fulfill the criterion, can only sum up to $360^{\circ}$ if they are all used, and are such that the tiles obtained by connecting $Q$ to $S$ and $T$ and $R$ to $U$ and $V$ are convex, and no two consecutive sides which they do not share with the original tile are of equal length. We will note later that the angles $\angle Z'ZW$ and $\angle Z''ZW$ are determined by the choice of $\alpha$ so we will consider that they were fixed prior to the placement of $QR$ and "tweak" the angles at $Q$ and $R$ accordingly. That this can be achieved is again fairly obvious since there are continuum many placements of the segment $QR$ that fulfill the convexity criterion (which must lie in the intersection of some halfplanes) and taking one that will fulfill the requirements on the angles is then almost inevitable since the set of forbidden angles is finite. With that the construction of the prototiles of $\mathcal{P}$ is complete.

Next we show that indeed the prototiles from $\mathcal{P}$ can be surrounded only in the way they are surrounded in Figure \ref{convex}. Let us start from tile $1$. Tile $1$ is different from the others in that it actually has two consecutive edges it does not share with the original tile that are of the same length. This, however, will not be important since the forcing of the placement of tiles will happen along other angles (for instance, angle $\beta$). Notice that the angle $\beta$ of each tile can be surrounded only with angles $\alpha$ and $\gamma$ and exactly in that order and orientation (note here that several tiles have angles equal to $\alpha$ and that therefore this surrounding is not unique; but each tile that has angle $\alpha$ as one of its inner angles will have the two edges incident to it of the same length as all the other such tiles and then the orientation of the tile makes sense). In particular, although, as mentioned, this will not entirely force a unique surrounding, it will force the placement of the tile $2$ as shown in Figure \ref{convex}. This tile will in turn force the placement of the tile $3$ from the same figure and the desired surrounding is obtained. Arguing in a similar manner, we see that any of the tiles $4,5$ and $6$ will force the rest through the angles at $Q$, and similarly any of tiles $5,6$ and $7$ will force the rest along the angles at vertex $R$. We are left to show that the tiles $7$ and $8$ force the placement of each other. If the angles at $Z$ are distinct from those at $P$ we are done. Otherwise (even though $\alpha$ could be chosen so that this is not the case (which is a bit harder to prove than the current statement)), the angles have to be equal. It then suffices to show that the edge $ZW$ is of different length than any of the edges $AP, BP$ and $CP$. To show this we will show that $|WZ| > |Z'Z''|$. Note that the point $W$ is obtained by rotating $Z'$ around $W'$ for $90^{\circ}$ in the clockwise direction, while point $Z'''$ is obtained by rotating point $Z'$ around $W'$ by $60^{\circ}$ in the same direction, meaning that $W$ is outside the triangle $Z'Z''Z'''$. Furthermore, we claim that $\angle Z'Z'''W$ is obtuse. To see why, observe that $\angle W'Z'''W$ is equal to $75^{\circ}$ and that $\angle W'Z'''Z' = 60^{\circ}$, making $\angle Z'Z'''W$ equal to $135^{\circ}$. This means that $|Z'W| > |Z'Z'''|$ and so the complement of the circle centered at $W$ and of radius $Z'Z'''$ with respect to the triangle $Z'Z''Z'''$ is nonempty and thus the original point $Z$ could have been chosen in that region, making sure that $ZW$ is longer than $Z'Z'''$ which is in turn longer than both $ZZ'$ and $ZZ''$ since they are contained in the triangle $Z'Z''Z'''$. As $Z'Z \cong AP$ and $Z''Z \cong BP,$ and as $\triangle ABC \cong \triangle Z'Z''Z''',$ we have shown that $WZ$ is longer than all of $AP, BP$ and $CP.$ We next proceed to use the argument we have used when discussing the orientations of the tiles to show that indeed tiles $7$ and $8$ have to meet as is shown in Figure \ref{convex}.

As now the protoset $\mathcal{P}$ behaves in the same way that the pair of prototiles discovered by Schmitt from Figure \ref{fig schmitt2} we have shown that $\mathcal{P}$ is indeed $\sigma$-morphic.

\end{proof}

Note that the left original prototile had to be divided in a symmetric manner: were it otherwise we could at each place in the tiling choose a different orientation of the tile, which would lead to $\mathfrak{c}$ many tilings.

\begin{figure}[h!]
\centering
\includegraphics[width=14cm]{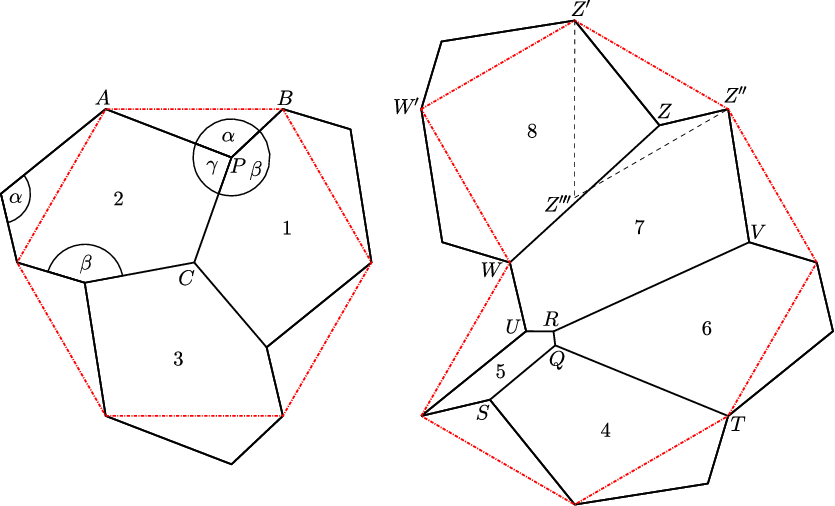}
\caption{ Prototiles of the protoset $\mathcal{P}$. There are $4$ distinct convex hexagons in $\mathcal{P}$ and $2$ distinct convex pentagons. The original protoset of Schmitt that we are subdividing to obtain $\mathcal{P}$ is shown in red (without the bumps and nicks) and is referred to as the original protoset in the proof. }
\label{convex}
\end{figure}

We also note that the protoset from Section \ref{4.1} can be recomposed into convex prototiles, as is shown in Figure \ref{fig protoconv3}. We will, however, omit the proof as it uses similar ideas as the previous one, and only note that some of the prototiles are quadrilaterals (5 out of 11) and that they tile the plane by themselves (it is for this reason that triangles are not present in any of the protosets for they alone admit $\mathfrak{c}$ many tilings). The angles of these quadrilaterals can, however, be chosen such that they are all monomorphic and thus no problem will sprout from them. It is also necessary to divide the leftmost prototile from Figure \ref{fig protoconv3} in a centrally-symmetric manner, which was indeed done in the said Figure. 

\begin{figure}[h!]
\centering
\includegraphics[width=14cm]{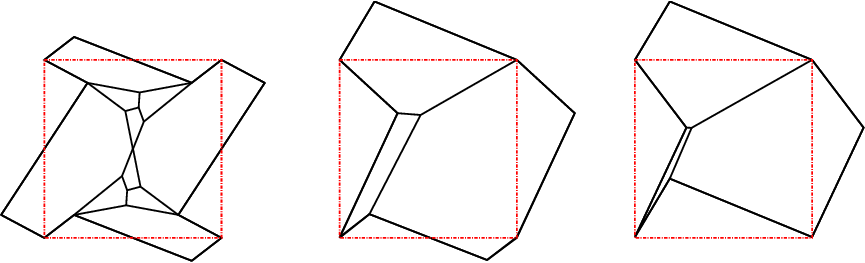}
\caption{ The convex protoset obtained by recomposition of prototiles from the protoset shown in Figure \ref{fig p3}. It consists of $5$ quadrilaterals, $4$ pentagons a hexagon and a heptagon.}
\label{fig protoconv3}
\end{figure}

\section{Open problems}\label{sec5}

For the end, we would like to draw the attention of the reader towards some open problems regarding the $\sigma$-morphic tiles. The original problem is of course still open and we don't even know what happens if the following weakening of the conditions is added.

\begin{problem}
    Does there exist a disconnected tile that is $\sigma$-morphic?
\end{problem}

The problem seems to be quite hard and is the only variation among the common variations that the authors of \cite{mi} weren't able to solve. 

Speaking of the said paper, the authors managed to construct a $\sigma$-morphic tile in $\mathbb{E}^d$ for $d \geqslant 3.$ The solid that they construct, however, has a lot of holes: the three-dimensional tile has genus 48! This, in some hand-wavy sense, allows a tile to influence tiles that are not so near to it, allowing for non-local matching rules. Recall from the discussion in Section \ref{sec2.1} that "having only local influence" can lead to $\mathfrak{c}$ many tilings, and limiting the bodies to those of genus $0$ we are in a situation in which these bodies are not so free to influence tiles from "different layers". All this is rather hand-wavy but we hope that the intuition is clear to the reader. Having that in mind, we ask the following question:

\begin{problem}
    Does there exist a $\sigma$-morphic solid in $E^d,$ $d\geqslant3$ whose genus is $0$?
\end{problem}

Returning back to the topic of Section \ref{sec2.1}, we ask the following, somewhat non-well defined question:

\begin{problem}
    What other ways are there of achieving $\mathfrak{c}$ many tilings by a protoset?
\end{problem}

We recall that Ammann constructed a set of three convex polygons that are aperiodic \cite{TIP}, page 549. This motivates us to ask the following:

\begin{problem}
    Does there exist a set of three (or even two) convex prototiles that is $\sigma$-morphic?
\end{problem}

Finally, we ask if the axioms of the set theory could potentially have any say in the matter of the original problem. Note first that a tile cannot tile the plane in more than $\mathfrak{c}$ many distinct ways. Indeed, in each tiling, we have $\aleph_0$ many tiles, and each of the tiles can be at any position in the plane (which we can describe by a triplet $(x,y,\alpha)$, where $(x,y)$ are the coordinates of one particular vertex of tile $T$ (which has been chosen in advance, and is the same for all $T_i$) and where the angle $\alpha$ is the angle that a fixed edge incident with the fixed vertex from before creates with the $x$--axis; ultimately, there are $\mathfrak{c}$ many positions for each tile). This is of course a gross overestimate, but still, this gives us that each of $\aleph_0$ many tiles has $\mathfrak{c}$ many choices, and ultimately, that the number of tilings does not exceed $\mathfrak{c}^{\aleph_0} = \mathfrak{c}$. This gives rise to an interesting question of the existence of tiles that tile the plane in $\kappa$ many distinct ways, where $\aleph_0 < \kappa < \mathfrak{c}$. Of course, for those $\kappa$ to exist in the first place, one has to work in the adequate axiomatic system. That a choice of the axioms of the set theory that we choose to work with could influence the result of a problem in combinatorial geometry, could perhaps be motivated by the results of Sz\' ekely, Shelah and Soifer and more recently, Payne \cite{payne, soifer, szekely}. We thus ask our final problem:

\begin{problem}
    Do there exist tiles that are $\kappa$-morphic, for $\aleph_0 < \kappa < \mathfrak{c}$ assuming that we are working in the axiomatic systems in which such cardinalities exist?
\end{problem}

\section*{Acknowledgments}
The author would like to thank Joseph S. Myers for allowing him to showcase the tiles from Section \ref{sec2.1}, and also for the helpful references for the same section. The author would also like to thank Toma\v s Valla for attracting his attention to the problem discussed in this paper.


\begin{thebibliography}{10}
\bibitem{adams}C. Adams, \emph{The Tiling Book: An Introduction to the Mathematical Theory of Tilings,}  American Mathematical Society (2022).

\bibitem{mi} B. Bašić, A. Džuklevski, and A. Slivková, Solutions to Seven and a Half Problems on Tilings. Electron. J. Comb., 30, 2023.

\bibitem{dobi} N. Dolbilin, The countability of a tiling family and the periodicity of a tiling. Discrete Comput Geom 13, 405–414 (1995).

\bibitem{danzer} L. Danzer, and N. Dolbilin, Delone Graphs; Some Species and Local Rules (1997).

\bibitem{PDT} B. Gr\"unbaum and G. C. Shephard, Patch-determined tilings, \emph{Math. Gaz.} {\bf 61} (1977), 31--38.

\bibitem{TIP} B. Gr\"unbaum and G. C. Shephard, {\it Tilings and Patterns}, W. H. Freeeman and Company, New York (1987).

\bibitem{jeandelrao} R. Jeandel, R. \&  M. Rao, An aperiodic set of 11 Wang tiles. Advances in
Combinatorics, \#1/1–37 (2021).

\bibitem{kari} J. Kari, A small aperiodic set of Wang tiles. Discret. Math., 160, 259-264 (1996).

\bibitem{mannprez} C. Mann, Problems involving simple shapes and small protosets, in: \emph{
School and Workshop: Combinatorics on Words and Tilings,} Centre de recherches math\'ematiques, Montr\'eal, Canada, March 27 -- April 7, 2017.

\bibitem{mannpentagon} C. Mann, J. McLoud-Mann, and D. Von Derau, Convex pentagons that admit $i$-block transitive tilings. Geom Dedicata 194, 141–167 (2018).

\bibitem{myerspoly} J. Myers, Polyomino, polyhex and polyiamond tiling, \url{https://www.polyomino.org.uk/mathematics/polyform-tiling/}.

\bibitem{myerspriv} J. Myers, Personal communication.

\bibitem{niven} I. Niven, Convex polygons which cannot tile the plane. Amer. Math. Monthly 85(1978), 785-792.

\bibitem{payne} M.S. Payne, Unit Distance Graphs with Ambiguous Chromatic Number. Electron. J. Comb., 16. (2007)

\bibitem{penrose} R. Penrose, Pentaplexity: A Class of Nonperiodic Things of the Plane, Eureka, 39, 1978, pp. 16–22. Reprinted in The Mathematical Intelligencer, 2, 1979, pp. 32–37, and in Geometrical Combinatorics, F. C. Holroyd and R. J. Wilson, eds. Pitman, 1984.

\bibitem{rao} M. Rao, Exhaustive search of convex pentagons which tile the plane, (2017). Arxiv: \url{https://arxiv.org/abs/1708.00274}

\bibitem{ReinhardtPhD} K. Reinhardt, Über die zerlegung der ebene in polygone. Dissertation der Naturwiss. Fakultat, Universitat Frankfurt/ Main, Borna, (1918).

\bibitem{Reinhardtconv} K. Reinhardt, Zwei Beweise f\"ur einen Satz \"uber die Zerlegung der Ebene. T$\hat{\textup{o}}$hoku Math. J. 28(1927), 221-225.

\bibitem{Reinhardt} K. Reinhardt, Zur Zerlegung der euklidischen R\"aume in kongruente Polytope, \emph{Sitzungsber. Preuss. Akad. Wiss. Berlin} (1928), 150--155.

\bibitem{schatesneider} D. Schattschneider: Tiling the plane with congruent pentagons. Math. Mag. 51(1), 29–44 (1978).

\bibitem{schmitttrougao} P. Schmitt, Pairs of tiles which admit finitely or countably infinitely many tilings, \emph{Geom. Dedicata} {\bf 20} (1986), 133--142.

\bibitem{Schmitt} P. Schmitt, Polymorphic pairs of prototiles, \emph{\"Osterreich. Akad. Wiss. Math.-Natur. Kl. Sitzungsber. II} {\bf 197} (1988), 305--313.

\bibitem{Schmitt2} P. Schmitt, Sets of tiles with a prescribed number of tilings, \emph{Geom. Dedicata} {\bf 21} (1986), 123--144.

\bibitem{schmitthierarchy} P. Schmitt, $\sigma$-morphic sets of prototiles, \emph{Discrete Comput. Geom.} {\bf  2} (1987), 271--295.

\bibitem{senechal} M. Senechal, Quasicrystals and Geometry. Cambridge University Press (1995).

\bibitem{soifer} S. Shelah and A. Soifer, Axiom of choice and chromatic number of the plane, J. Combin. Theory Ser. A, 103 (2003), pp. 387–391.

\bibitem{spectre} D. Smith, J.~S. Myers, C.~S. Kaplan, and C. Goodman-Strauss, 
A chiral aperiodic monotile, preprint (2023), \url{https://arxiv.org/abs/2305.17743}.

\bibitem{hat} D. Smith, J.~S. Myers, C.~S. Kaplan, and C. Goodman-Strauss, An aperiodic
monotile, preprint (2023), \url{https://arxiv.org/abs/2303.10798}.

\bibitem{szekely}  L. A. Sz\' ekely, Measurable chromatic number of geometric graphs and sets without some distances in Euclidean space, Combinatorica, 4 (1984), pp. 213–218.


\end{thebibliography}
\end{document}